\documentclass{amsart}

\usepackage[utf8]{inputenc}

\usepackage{graphicx}
\usepackage{amsmath,amsfonts,amssymb,amsthm}
\usepackage{hyperref}
\usepackage[capitalize]{cleveref}
\usepackage{thmtools}
\usepackage{xcolor}
\usepackage{subcaption}
\usepackage{float}
\usepackage{ytableau}
\usepackage{diagbox}
\usepackage{tikz}
\usetikzlibrary{external}
\tikzexternalize

\graphicspath{ {./figures/} }

\theoremstyle{definition}

\newtheorem{thm}{Theorem}

\newtheorem{defn}[thm]{Definition}

\newtheorem{lem}[thm]{Lemma}
\newtheorem{prop}[thm]{Proposition}

\numberwithin{thm}{section}
\numberwithin{defn}{section}
\numberwithin{rem}{section}
\numberwithin{conj}{section}
\numberwithin{lem}{section}
\numberwithin{cor}{section}
\numberwithin{example}{section}
\numberwithin{prop}{section}
\numberwithin{figure}{section}
\numberwithin{table}{section}
\numberwithin{equation}{section}

\author{D Chen}
\title{Alternative Proof of the Determinant of Complete Non-Ambiguous Trees}

\begin{document}
\maketitle
\begin{abstract}
    Complete non-ambiguous trees have been studied in various contexts. Recently, a conjecture was made about their determinants, and subsequently proved by Aval. An alternative proof is given here.
\end{abstract}

\section{Introduction}

Complete non-ambiguous trees (CNATs) are a combinatorial object of recent interest \cite{abs10}. They may be seen as the proper way to embed a complete binary tree into a grid (see \cref{def:CNAT}).
Recently, a study of these objects was launched in \cite{co24}, in which a conjecture was made that the determinant is evenly distributed for odd-size CNATs. This was subsequently proved bijectively in \cite{a23}.

In this short article, first some self-contained context to the problem is given by stating the background definitions and conjecture. Then an alternative proof is given.

\section{Definitions}
    
The definition of a (complete) non-ambiguous tree is recalled from \cite{abs10} as follows:

\begin{defn}\label{def:CNAT}
    A \emph{non-ambiguous tree} is a filling of a rectangular grid in which each cell may or may not contain a dot, such that the following constraints are satisfied:

\begin{itemize}
    \item[] (\textbf{Existence of a root}) The top-left cell (the \emph{root}) is dotted.
    \item[] (\textbf{Non-ambiguity}) For every dotted cell (\emph{vertex}) other than the root, there is another vertex either in the same column and above, or in the same row and to the left, but not both.
    \item[] (\textbf{Minimality}) Every row and column contains at least one vertex.
\end{itemize}

By connecting each vertex to its unique precursor in the same row or column, a rooted binary tree is formed, thus motivating the name \emph{non-ambiguous tree}. If the underlying binary tree is complete, then the object is referred to as a \emph{complete non-ambiguous tree} (CNAT).

This implies that a CNAT with \begin{math}2n-1\end{math} vertices has \begin{math}n\end{math} leaves and \begin{math}n-1\end{math} internal vertices. Furthermore, every row and column has exactly one leaf, thus a CNAT with \begin{math}2n-1\end{math} vertices is contained in an \begin{math}n \times n\end{math} grid. We refer to the CNAT as having size \begin{math}n\end{math}, or dimensions \begin{math}n \times n\end{math}.

\end{defn}

\begin{figure}[H]
    \centering
    \includegraphics[width=35mm]{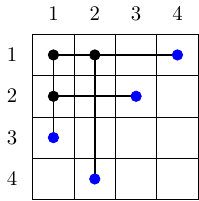}
    \caption{A 4x4 CNAT with leaf vertices highlighted in blue}
\end{figure}

\begin{defn}[Leaf Matrices]
A given CNAT, $M$, can be viewed as a square matrix by setting dotted cells to 1 and empty cells to 0. The \emph{leaf matrix} of $M$ is the matrix obtained when setting leaf vertices to 1, and all other cells to 0.
\end{defn}

Since there is exactly one leaf in every row and column, a leaf matrix is always a permutation matrix, and thus has determinant $\pm 1$. In fact, it was proved in Proposition 2.5 of \cite{co24} that the determinant of any CNAT is equal to the determinant of its leaf matrix.

\section{Statement of the conjecture}\label{sec:statement}

Given the preliminary definitions, we may now study the number of CNATs with determinant 1 and -1. Define these to be \begin{math}A_n\end{math} and \begin{math}B_n\end{math} respectively. Their sum is the total number of CNATs of a given size, \begin{math}T_n\end{math}. Also define the \emph{defect}, \begin{math}D_n = A_n - B_n\end{math}.

The numbers $T_n$ are well known, and Proposition 9 of \cite{abs10} gives a recurrence for them:
\begin{displaymath}
T_n = \sum_{k=1}^{n-1} {n-1 \choose k-1}{n-1 \choose k}T_kT_{n-k}
\end{displaymath}

Now, we may see the following computed data for $A_n$ and $B_n$.

\begin{center}
\begin{tabular}{ c|c|c|c|c|c|c|c|c } 
\begin{math}n\end{math} & 1 & 2 & 3 & 4 & 5 & 6 & 7 & 8\\
\hline
\text{\# with det 1, \begin{math}A_n\end{math}} & 1 & 0 & 2 & 17 & 228 & 4728 & 137400 & 5321889\\
\text{\# with det -1, \begin{math}B_n\end{math}} & 0 & 1 & 2 & 16 & 228 & 4732 & 137400 & 5321856\\
\hline
\begin{math}T_n = A_n+B_n\end{math} & 1 & 1 & 4 & 33 & 456 & 9460 & 274800 & 10643745\\
\begin{math}D_n = A_n-B_n\end{math} & 1 & -1 & 0 & 1 &  0 & -4 & 0 & 33\\
\end{tabular}
\end{center}

From the table, the determinant seems to be evenly distributed for odd \begin{math}n\end{math}; furthermore for even \begin{math}n\end{math} the defect \begin{math}D_n\end{math} seems to be an alternating version of \begin{math}T_n\end{math}. Indeed, this is the conjecture put forward by \cite{co24}:

\begin{displaymath} A_n-B_n = \begin{cases}
        1 & \text{if } n=1\\
        0 & \text{if } n>1 \text{ is odd}\\
        (-1)^{n/2}T_{n/2} & \text{if \begin{math}n\end{math} even}
    \end{cases} \end{displaymath}

In particular, since \begin{math}A_n + B_n = T_n\end{math} gives two simultaneous equations, it would imply that number of CNATs with determinants 1 and -1 can be written in terms of the sequence \begin{math}T\end{math}.

\section{Proof}

Finally, an inductive proof of the conjecture is given.

\begin{defn}
    For convenience, let \begin{math}e^n_k\end{math} be the number of \begin{math}k\end{math}-subsets of \begin{math}\{1, \dots, n\}\end{math} with an even sum. Similarly define \begin{math}o^n_k\end{math} for the number of subsets with odd sum.
\end{defn}

\begin{prop}\label{prop:recdet1} \begin{math}\forall n \geq 2\end{math},
\begin{equation*}
\begin{split}
    A_n = \sum_{k=1}^{n-1}
    & \left(A_kA_{n-k} + B_kB_{n-k})(e^{n-1}_{k-1}e^{n-1}_k + o^{n-1}_{k-1}o^{n-1}_k\right) \\
    & + \left(A_kB_{n-k} + B_kA_{n-k})(e^{n-1}_{k-1}o^{n-1}_k + o^{n-1}_{k-1}e^{n-1}_k\right)
\end{split}
\end{equation*}
\end{prop}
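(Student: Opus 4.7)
The plan is to decompose a CNAT of size $n \geq 2$ at its root (at $(1,1)$) into an east subtree of size $k$ (sharing row 1) and a south subtree of size $n-k$ (sharing column 1), in the same way that underlies the recurrence for $T_n$. The east subtree occupies row 1 together with an additional $(k-1)$-subset $R_E$ of $\{2,\ldots,n\}$, and a $k$-subset $C_E$ of $\{2,\ldots,n\}$ as its columns; the south subtree occupies the complementary rows and columns, together with column 1. Any two CNATs of sizes $k$ and $n-k$, together with any admissible $(R_E, C_E)$, reassemble uniquely into a CNAT of size $n$, so the task is to track how the leaf-matrix determinant behaves under this decomposition.

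Writing $\sigma, \sigma_E, \sigma_S$ for the leaf-matrix permutations of the big CNAT and the two subtrees, one may factor
\[ \sigma = \pi_C \, (\sigma_E \oplus \sigma_S) \, \pi_R^{-1}, \]
where $\pi_R$ (respectively $\pi_C$) is the shuffle permutation of $\{1,\ldots,n\}$ that lists the east rows (respectively columns) first and the south rows (respectively columns) second, each in sorted order. Applying the classical sign formula $\mathrm{sgn}(\pi) = (-1)^{\Sigma(S) - \binom{k+1}{2}}$ for a shuffle placing the $k$-subset $S$ first (with $\Sigma(S) := \sum_{x \in S} x$), taking $S = \{1\} \cup R_E$ for $\pi_R$ and $S = C_E$ for $\pi_C$, the two $\binom{k+1}{2}$ contributions cancel and one obtains
\[ \mathrm{sgn}(\sigma) = (-1)^{1 + \Sigma(R_E) + \Sigma(C_E)} \, \mathrm{sgn}(\sigma_E) \, \mathrm{sgn}(\sigma_S). \]

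To translate the parity condition into $e^{n-1}_\bullet, o^{n-1}_\bullet$ counts, I would apply the bijective shift $X \mapsto \{x - 1 : x \in X\}$ to send $(R_E, C_E)$ to a pair of subsets of $\{1, \ldots, n-1\}$ of the same sizes $k-1$ and $k$. Under this shift the total element-sum changes by $(k-1) + k = 2k - 1$, an odd number, so $\Sigma(R_E) + \Sigma(C_E)$ has the opposite parity from the shifted sum. Hence the number of pairs $(R_E, C_E)$ with $\Sigma(R_E) + \Sigma(C_E)$ odd equals the number of shifted pairs with total sum even, namely $e^{n-1}_{k-1} e^{n-1}_k + o^{n-1}_{k-1} o^{n-1}_k$; the complementary count is $e^{n-1}_{k-1} o^{n-1}_k + o^{n-1}_{k-1} e^{n-1}_k$.

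A CNAT contributes to $A_n$ precisely when $\mathrm{sgn}(\sigma) = +1$. By the factorization above, this occurs either when $\mathrm{sgn}(\sigma_E)\mathrm{sgn}(\sigma_S) = +1$ (counted by $A_k A_{n-k} + B_k B_{n-k}$) and $\Sigma(R_E) + \Sigma(C_E)$ is odd (giving the factor $e^{n-1}_{k-1} e^{n-1}_k + o^{n-1}_{k-1} o^{n-1}_k$), or when the two subtree signs differ (counted by $A_k B_{n-k} + B_k A_{n-k}$) and $\Sigma(R_E) + \Sigma(C_E)$ is even (giving $e^{n-1}_{k-1} o^{n-1}_k + o^{n-1}_{k-1} e^{n-1}_k$). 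Summing over $k \in \{1,\ldots,n-1\}$ yields the stated formula. The main obstacle is executing the shuffle-sign calculation and parity-reindexing cleanly without sign errors; once the sign factorization is in place, the combinatorial assembly is essentially mechanical.
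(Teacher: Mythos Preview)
Your proof is correct and follows essentially the same root-decomposition strategy as the paper: both split a size-$n$ CNAT at the root into sub-CNATs of sizes $k$ and $n-k$ and then track how the leaf-determinant changes under the row/column shuffle. Your use of the closed-form shuffle-sign formula $\mathrm{sgn}(\pi)=(-1)^{\Sigma(S)-\binom{k+1}{2}}$ together with the explicit $x\mapsto x-1$ reindexing is a slightly more bookkeeping-explicit version of the paper's adjacent-swap argument (which leaves the passage from subsets of $\{2,\dots,n\}$ to subsets of $\{1,\dots,n-1\}$ implicit), but the underlying combinatorics is identical.
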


\begin{proof}
    Proposition 9 of \cite{abs10} establishes a recurrence for \begin{math}T_n\end{math}, derived by removing the root node of the CNAT to obtain two smaller CNATs. Let us extend their proof to formulate recurrences for \begin{math}A_n\end{math} and \begin{math}B_n\end{math}.

    By removing the root node of a CNAT, we obtain two smaller CNATs after flattening, whose rows and columns are interweaved together. Also, one contains all non-root vertices in the top row, and the other contains all non-root vertices in the leftmost column.
    Hence to construct a CNAT \begin{math}T\end{math} of size \begin{math}n\end{math} with determinant 1, we need to choose:
        \begin{enumerate}
            \item The size of the smaller CNAT, say \begin{math}M\end{math}, that contains the top row. Call this size \begin{math}k\end{math} (\begin{math}1 \leq k \leq n-1\end{math}). The other smaller CNAT, say \begin{math}M'\end{math}, contains the leftmost column and has size \begin{math}n-k\end{math}.
            
            \item The structures of both smaller CNATs. We can choose any combination of the two determinants, so long as in the next step we interweave the rows and columns so that the determinant is 1. We consider two cases; if \begin{math}\det M \det M' = 1\end{math} then there are \begin{math}A_kA_{n-k}+B_kB_{n-k}\end{math} ways since the determinants are both even or both odd, and similarly if \begin{math}\det M \det M' = -1\end{math} then there are \begin{math}A_kB_{n-k}+B_kA_{n-k}\end{math} ways.
            
            \item A way of interweaving the rows and columns so that the determinant becomes (or stays) 1. For this, we separately choose the rows of \begin{math}M\end{math} out of the rows of \begin{math}T\end{math}, and then the columns. For the rows, since \begin{math}M\end{math} must contain the top row, we have \begin{math}k-1\end{math} out of \begin{math}n-1\end{math} rows left to pick. For the columns, \begin{math}M\end{math} cannot contain the leftmost column because \begin{math}M'\end{math} does, so we have \begin{math}k\end{math} columns to pick out of \begin{math}n-1\end{math}.
            
            Now, we must pick the rows and columns so that the determinant is 1. Given a valid such choice, we may perform a sequence of swaps of adjacent rows or columns to turn it into a block diagonal matrix whose submatrices are \begin{math}M\end{math} and \begin{math}M'\end{math} (so that \begin{math}M\end{math} is contained in the first \begin{math}k\end{math} rows and first \begin{math}k\end{math} columns; \begin{math}T\end{math} need not still be a CNAT). The determinant of this block diagonal matrix will be \begin{math}\det M \det M'\end{math}. Note that we are only allowed to perform \emph{adjacent} swaps because the rows and columns in \begin{math}M\end{math} and \begin{math}M'\end{math} must stay in the same order. Each of these swaps will flip the sum of chosen rows and columns mod 2, and will also flip the determinant of \begin{math}T\end{math}. Hence if \begin{math}\det M \det M'\end{math} is 1 then the valid choices of rows and columns are precisely those where the sum of the rows and columns is congruent to \begin{math}(1+\dots+k)+(1+\dots+k) \equiv 0\end{math} mod 2. Thus there are \begin{math}e^{n-1}_{k-1}e^{n-1}_k + o^{n-1}_{k-1}o^{n-1}_k\end{math} ways, because the sum of rows and sum of columns must both be odd, or both be even. Similarly if \begin{math}\det M \det M'\end{math} is -1 then there are  \begin{math}e^{n-1}_{k-1}o^{n-1}_k + o^{n-1}_{k-1}e^{n-1}_k\end{math} ways.
        \end{enumerate}

        Summing the number of ways for each possibility of \begin{math}\det M \det M'\end{math}, then summing over \begin{math}k\end{math}, the result follows.
\end{proof}

\begin{lem}\label{lem:dets_diffrec}
\begin{equation*}
\begin{split}
    D_n = \sum_{k=1}^{n-1}
    & D_kD_{n-k}(e^{n-1}_{k-1} - o^{n-1}_{k-1})(e^{n-1}_k - o^{n-1}_k)
\end{split}
\end{equation*}
\end{lem}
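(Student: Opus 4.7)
The plan is to derive a recurrence for $B_n$ analogous to the one for $A_n$ in \cref{prop:recdet1}, and then subtract. The derivation for $B_n$ follows \emph{mutatis mutandis}: we again choose the size $k$ of the CNAT $M$ containing the top row, choose structures for $M$ and $M'$, and then choose an interleaving of rows and columns. The only change is that now we want the final determinant to be $-1$, so in step (3) the parity condition on the chosen rows and columns flips. Concretely, if $\det M \det M' = 1$ we need an odd number of adjacent swaps, so the number of valid interleavings is $e^{n-1}_{k-1}o^{n-1}_k + o^{n-1}_{k-1}e^{n-1}_k$, and if $\det M \det M' = -1$ we need an even number, giving $e^{n-1}_{k-1}e^{n-1}_k + o^{n-1}_{k-1}o^{n-1}_k$ ways. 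Thus
\begin{equation*}
\begin{split}
    B_n = \sum_{k=1}^{n-1}
    & \left(A_kA_{n-k} + B_kB_{n-k}\right)\left(e^{n-1}_{k-1}o^{n-1}_k + o^{n-1}_{k-1}e^{n-1}_k\right) \\
    & + \left(A_kB_{n-k} + B_kA_{n-k}\right)\left(e^{n-1}_{k-1}e^{n-1}_k + o^{n-1}_{k-1}o^{n-1}_k\right).
\end{split}
\end{equation*}

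Next, subtract term-by-term. Collecting the row/column interleaving factors, one observes that
\begin{equation*}
\left(e^{n-1}_{k-1}e^{n-1}_k + o^{n-1}_{k-1}o^{n-1}_k\right) - \left(e^{n-1}_{k-1}o^{n-1}_k + o^{n-1}_{k-1}e^{n-1}_k\right) = (e^{n-1}_{k-1} - o^{n-1}_{k-1})(e^{n-1}_k - o^{n-1}_k),
\end{equation*}
while the other difference is the negative of this. Simultaneously, the CNAT-structure coefficients factor as
\begin{equation*}
(A_kA_{n-k} + B_kB_{n-k}) - (A_kB_{n-k} + B_kA_{n-k}) = (A_k - B_k)(A_{n-k} - B_{n-k}) = D_k D_{n-k}.
\end{equation*}
Substituting these two identities into $A_n - B_n$ yields exactly the claimed formula.

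The derivation is essentially routine once the recurrence for $A_n$ is in hand, so the only subtlety is verifying the parity bookkeeping for $B_n$ in step (3): one must be careful that flipping the target determinant from $+1$ to $-1$ is equivalent to flipping the required parity of the number of adjacent swaps. I expect this parity check to be the only step where a reader might want to pause; everything after it is algebraic factoring.
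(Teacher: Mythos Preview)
Your proof is correct and follows essentially the same approach as the paper: derive the analogous recurrence for $B_n$ by flipping the parity condition in step (3), then subtract and factor. You simply spell out the two factorizations that the paper leaves implicit.
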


\begin{proof}
    First note that $\forall n \geq 2$, the following formula holds:
        \begin{equation*}
        \begin{split}
            B_n = \sum_{k=1}^{n-1}
            & \left(A_kB_{n-k} + B_kA_{n-k})(e^{n-1}_{k-1}e^{n-1}_k + o^{n-1}_{k-1}o^{n-1}_k\right) \\
            & + \left(A_kA_{n-k} + B_kB_{n-k})(e^{n-1}_{k-1}o^{n-1}_k + o^{n-1}_{k-1}e^{n-1}_k\right)
        \end{split}
        \end{equation*}
    This can be seen in an identical way to the proof of \cref{prop:recdet1}, except in step 3 where we now wish to interweave the rows and columns so that the resulting determinant is -1.

    Now, subtract this expression for \begin{math}B_n\end{math} from the expression for \begin{math}A_n\end{math} in \cref{prop:recdet1}. The resulting equation factorises, and then occurrences of $A-B$ may be replaced with $D$.
\end{proof}

\begin{lem}\label{lem:dets_evenminusodd}
    \begin{math}\forall n\in \mathbb{N}, k \in \mathbb{Z}\end{math} with \begin{math}0 \leq k \leq n-1\end{math}:
    \begin{displaymath}e^{2n-1}_{2k} - o^{2n-1}_{2k} = (-1)^k {n-1 \choose k} \end{displaymath}
    \begin{displaymath}e^{2n-1}_{2k+1} - o^{2n-1}_{2k+1} = (-1)^{k+1} {n-1 \choose k} \end{displaymath}
\end{lem}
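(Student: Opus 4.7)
The natural plan is to prove both identities simultaneously with a single generating function computation. Define the signed generating polynomial
\begin{equation*}
P_{m}(x) = \sum_{S \subseteq \{1,\dots,m\}} (-1)^{\sum_{i \in S} i}\, x^{|S|},
\end{equation*}
so that the coefficient of $x^k$ in $P_m(x)$ is exactly $e^m_k - o^m_k$. Since the choice to include or exclude each element $i$ contributes an independent factor, $P_m(x)$ factors as $\prod_{i=1}^{m}\bigl(1 + (-1)^i x\bigr)$.

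Specializing to $m = 2n-1$, I would count that $\{1,\dots,2n-1\}$ contains $n$ odd elements (each contributing $(1-x)$) and $n-1$ even elements (each contributing $(1+x)$), so
\begin{equation*}
P_{2n-1}(x) = (1-x)^{n}(1+x)^{n-1} = (1-x)\bigl(1 - x^2\bigr)^{n-1}.
\end{equation*}
Expanding $(1-x^2)^{n-1}$ by the binomial theorem yields $\sum_{j=0}^{n-1}(-1)^j\binom{n-1}{j}x^{2j}$. Multiplying by $(1-x)$ separates the even- and odd-degree contributions cleanly: the coefficient of $x^{2k}$ is $(-1)^k\binom{n-1}{k}$, and the coefficient of $x^{2k+1}$ is $-(-1)^k\binom{n-1}{k} = (-1)^{k+1}\binom{n-1}{k}$. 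Reading these off against the definition of $P_{2n-1}$ gives both identities at once.

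The approach is essentially a one-line computation once the generating function is set up, so there is no real obstacle — the only mild subtlety is to remember the correct parity count (that among $\{1,\dots,2n-1\}$ there are $n$ odd versus $n-1$ even indices), which is what forces the asymmetric exponents $n$ and $n-1$ and ultimately produces the $\binom{n-1}{k}$ on the right-hand side rather than something symmetric in $n$.
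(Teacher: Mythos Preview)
Your proof is correct. The generating function
\[
P_{2n-1}(x)=\prod_{i=1}^{2n-1}\bigl(1+(-1)^i x\bigr)=(1-x)^n(1+x)^{n-1}=(1-x)(1-x^2)^{n-1}
\]
does exactly what you claim, and extracting the coefficients of $x^{2k}$ and $x^{2k+1}$ gives the two identities immediately.

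This is a genuinely different route from the paper's argument. The paper proceeds combinatorially: it pairs up the elements as $\{1,2\},\{3,4\},\dots,\{2n-3,2n-2\}$ and lets the group $C_2^{n-1}$ act by swapping within each pair. Any subset that contains exactly one member of some pair lies in an orbit on which the parity of the sum is balanced, so those orbits contribute zero to $e-o$. The surviving subsets are those that take each pair entirely or not at all; there are $\binom{n-1}{k}$ of them once $k$ pairs are chosen, with the inclusion of $2n-1$ forced by the target subset size, and they all share the same sum parity, yielding the sign $(-1)^k$ or $(-1)^{k+1}$. Your algebraic approach is shorter and handles both parities at once with no case analysis; the paper's involution argument is more bijective in flavour and makes visible \emph{which} subsets survive the cancellation. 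Either proof is perfectly adequate here.
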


\begin{proof}
    Consider the group action of $C_2^k$ on subsets of ${1, \dots, 2n-1}$, with generators switching $1 \leftrightarrow 2, 3 \leftrightarrow 4, \dots, 2n-3 \leftrightarrow 2n-2$. If for some pair $(2t-1,2t)$ only one of them is in a subset, then the orbit of that subset has equally many subsets with even and odd sum because switching $2t-1 \leftrightarrow 2t$ changes the parity of the sum. The subsets left over are those in which both $2t-1$ and $2t$ are in the subset, or neither. Note that once we have chosen these pairs, there is no choice in whether to pick $2n-1$, because we must match the parity of the total number of items chosen. Thus there are ${n-1 \choose k}$ of the leftover subsets, and their sums all have the same parity. The factor of $(-1)^k$ follows.
\end{proof}

\begin{thm}
    \begin{displaymath}D_n = A_n-B_n = \begin{cases}
        1 & \text{if } n=1\\
        0 & \text{if } n>1 \text{ is odd}\\
        (-1)^{n/2}T_{n/2} & \text{if \begin{math}n\end{math} even}
    \end{cases}\end{displaymath}
\end{thm}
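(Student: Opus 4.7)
My plan is to induct on $n$. The base cases $n=1$ and $n=2$ follow directly from the table in \cref{sec:statement}, and the inductive step is driven by \cref{lem:dets_diffrec} together with \cref{lem:dets_evenminusodd}.

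For the inductive step I would split on the parity of $n$. When $n$ is odd with $n \geq 3$, observe that since $k + (n-k) = n$ is odd, exactly one of $k, n-k$ is odd; the inductive hypothesis then forces that odd index to be $1$ (otherwise $D_k D_{n-k} = 0$), so only $k = 1$ and $k = n-1$ can contribute to the sum in \cref{lem:dets_diffrec}. For these boundary terms I would check by hand that $e^{n-1}_1 - o^{n-1}_1 = 0$ and $e^{n-1}_{n-2} - o^{n-1}_{n-2} = 0$: since $n-1$ is even, the singletons of $\{1, \dots, n-1\}$ split evenly between even and odd, and so do their complements. Every summand thus vanishes, yielding $D_n = 0$.

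When $n = 2m$ is even with $m \geq 2$, the same parity argument eliminates the odd-$k$ terms, leaving only contributions from $k = 2j$ with $1 \leq j \leq m-1$. The inductive hypothesis gives $D_{2j} D_{2(m-j)} = (-1)^j T_j \cdot (-1)^{m-j} T_{m-j} = (-1)^m T_j T_{m-j}$, while \cref{lem:dets_evenminusodd} (applied with its $n$ taken to be $m$) yields
\begin{equation*}
(e^{2m-1}_{2j-1} - o^{2m-1}_{2j-1})(e^{2m-1}_{2j} - o^{2m-1}_{2j}) = (-1)^j \binom{m-1}{j-1} \cdot (-1)^j \binom{m-1}{j} = \binom{m-1}{j-1}\binom{m-1}{j}.
\end{equation*}
Substituting into \cref{lem:dets_diffrec} and pulling $(-1)^m$ out of the sum gives
\begin{equation*}
D_n = (-1)^m \sum_{j=1}^{m-1} \binom{m-1}{j-1}\binom{m-1}{j} T_j T_{m-j} = (-1)^m T_m,
\end{equation*}
where the last equality is exactly the recurrence for $T_m$ from \cref{sec:statement}.

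The only real obstacle is the bookkeeping: \cref{lem:dets_evenminusodd} is phrased for odd-size ambient sets, so the two surviving boundary parity counts in the odd-$n$ case (which live on the even-size set $\{1, \dots, n-1\}$) must be verified directly, though they are trivial. Once that is done, the even case is essentially a direct substitution that recognises the resulting convolution as the $T_m$ recurrence.
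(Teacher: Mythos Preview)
Your proof is correct and follows essentially the same approach as the paper: induction on $n$, splitting on parity, using the inductive hypothesis to kill all but the boundary terms in \cref{lem:dets_diffrec} for odd $n$ and then disposing of those via $e^{n-1}_1=o^{n-1}_1$, and for even $n=2m$ reducing to the $T_m$ recurrence via \cref{lem:dets_evenminusodd}. The only cosmetic difference is that the paper lists $n=3$ as a separate base case whereas your odd-case argument already handles it.
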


\begin{proof}
    Induction on $n$. Note it is true for $n=1,2$, and true for $n=3$ because $A_3=B_3=2$. Now suppose it is true up to $n-1$ where $n>3$; we will show it for $n$.

    If $n$ is odd:
    for all \begin{math}2 \leq k \leq n-2\end{math}, one of \begin{math}k\end{math} and \begin{math}n-k\end{math} is odd and larger than 1, because \begin{math}n\end{math} is odd. Hence \begin{math}D_kD_{n-k} = 0\end{math} by inductive hypothesis. Thus in the expression given for \begin{math}D_n\end{math} in \cref{lem:dets_diffrec}, all terms in the RHS sum vanish except \begin{math}k=1\end{math} and \begin{math}k=n-1\end{math}. However both of these terms still contribute zero, because \begin{math}n\end{math} is odd so \begin{math}e^{n-1}_1=o^{n-1}_1\end{math} (and also \begin{math}e^{n-1}_{n-2} = o^{n-1}_{n-2}\end{math} by taking the complement of subsets of size 1). So $D_n=0$ as required.

    Otherwise, $n$ is even: write \begin{math}n=2j, j \in \mathbb{N}\end{math}. In the expression given for $D_n$ in \cref{lem:dets_diffrec}, terms in the RHS sum vanish when $k$ is odd because $D_k = D_{2j-k} = 0$ by inductive hypothesis. Hence only the even terms contribute to the sum. Write \begin{math}k = 2r, r \in \mathbb{N}\end{math}:
    \begin{displaymath}D_{2j} = \sum_{r=1}^{j-1} D_{2r}D_{2j-2r}(e^{2j-1}_{2r-1} - o^{2j-1}_{2r-1})(e^{2j-1}_{2r} - o^{2j-1}_{2r})\end{displaymath}
    \begin{displaymath} = \sum_{r=1}^{j-1} D_{2r}D_{2j-2r}{j-1 \choose r-1}{j-1 \choose r} \;\text{by \cref{lem:dets_evenminusodd}}\end{displaymath}
    \begin{displaymath} = (-1)^j\sum_{r=1}^{j-1} T_rT_{j-r}{j-1 \choose r-1}{j-1 \choose r} \;\text{by inductive hypothesis}\end{displaymath}
    The RHS is \begin{math}(-1)^j\end{math} multiplied by the recurrence given in \cref{sec:statement} for the number of CNATs of a given size. Hence it equals \begin{math}(-1)^jT_j\end{math} as required.
    
\end{proof}

\bibliographystyle{alpha}
\bibliography{bibliography}
\addcontentsline{toc}{section}{References}

\end{document}